\newtheorem{theorem}{Theorem}[section]
\newtheorem{lemma}[theorem]{Lemma}
\newtheorem{corollary}[theorem]{Corollary}
\theoremstyle{definition}
\newtheorem{definition}[theorem]{Definition}
\newtheorem{remark}[theorem]{Remark}
\numberwithin{equation}{section}
 \DeclareMathOperator{\Div}{Div}
\begin{document}

\title
{Canonical maps of surfaces defined by Abelian covers}

\author[Rong Du]{Rong Du$^{\dag}$}
\address{Department of Mathematics\\
Shanghai Key Laboratory of PMMP\\
East China Normal University\\
Rm. 312, Math. Bldg, No. 500, Dongchuan Road\\
Shanghai, 200241, P. R. China} \email{rdu@math.ecnu.edu.cn}
\address{Current address: Department of Mathematics\\
The University of Hong Kong\\
Rm. 408, Run Run Shaw Bldg, Pokfulam, Hong Kong}
\email{rongdu@hku.hk}
\author[Yun Gao]{Yun Gao$^{\dag\dag}$}

\address{Department of Mathematics, Shanghai Jiao Tong University,
Shanghai 200240, P. R. of China}

\email{gaoyunmath@sjtu.edu.cn}

\thanks{$^{\dag}$ The Research Sponsored by The National Natural Science Foundation of China, Shanghai Pujiang Program and the Scientific Research Foundation for the Returned Overseas Chinese Scholars, State Education Ministry.}
\thanks{$^{\dag\dag}$ The Research Sponsored by National Natural Science Foundation of China and The Innovation Program of Shanghai
Municipal Education Commission.}

 \maketitle

 \begin{abstract}{In this paper, we classified the surfaces whose canonical maps are abelian covers over $\mathbb{P}^2$. Moveover, we give defining equations for Perssson's surface and Tan's surfaces with odd canonical degrees explicitly. }\end{abstract}

\section{Introduction}\label{secintro}
Let $X$ be a minimal surface of general type over $\mathbb{C}$. As
usual, let $p_g$, $q$, $\chi(\mathcal{O}_X)$, $K_X^2$ be the
numerical invariants of $X$,
$$\varphi_{K_X}: X\rightarrow \Sigma \subset\mathbb{P}^{p_g-1}$$
be the canonical map of $X$ and $d=$deg$\varphi_{K_X}$ be the
canonical degree of $X$. It is well known that the behavior of
$\varphi_{K_X}$ is quite complicated, however the degree of it can
not be too large. In 1979, Beauville (\cite{Bea}) proved that when
$\varphi_{K_X}$ is a generically finite map, the degree is at most
$36$. Furthermore, deg$\varphi_{K_X}$=36 if and only if $K_X^2=36$,
$p_g=3$, $q=0$, $\Sigma=\mathbb{P}^2$ and $|K_X|$ is base point
free. Later, Xiao also found some restrictions on surfaces with high
canonical degrees (\cite{Xiao}). For surfaces of general type, since
the canonical degree is bounded above, it is interesting to know
which positive integers $d$'s occur as the degree of the canonical
map. Among all known surfaces with highest canonical degree less
than $36$ is the surface with canonical degree $16$ which was
constructed by Persson(\cite{Per}) as follows. let $\pi:
X\rightarrow Y$ be a double cover of a Campedelli surface $Y$
branched along $2K_Y$, where the Campedelli surface is the surface
of general type with $p_g=q=0$, $K_Y^2=2$, and $|2K_Y|$ is base
point free. He found that $\varphi_{K_X}=\pi \circ \varphi_{K_Y}$,
hence $\varphi_{K_X}$ is degree $16$ over $\mathbb{P}^2$. We see
that his construction is based on the the existence of Campedelli
surface, however the construction of a Campedelli surface is not an
easy work (\cite{Pet}).  The remaining problem that whether there
exists a surface of general type over $\mathbb{P}^2$ with canonical
degree of the gap between $17$ and $35$ is still open.

If $\Sigma$ is a canonical surface, there are plenty of examples
(see \cite{Bea}, \cite{Cat1}, \cite{V-Z} ) with canonical degrees
being $2$. For $d=3$ and $d=5$, Tan (\cite{Tan}) and Pardini
(\cite{Par1}) constructed several surfaces independently. When
$p_g(\Sigma)=0$, Beauville (\cite{Bea}) constructed surfaces with
$\chi(\mathcal{O}_X)$ arbitrarily large and the canonical degrees
$2,4,6$ and $8$ . For $d=9$, Tan also constructed a surface in
\cite{Tan}. However, as far as we know that it is still not clear
which positive integers $d$'s occur as the degree of the canonical
map even when $\Sigma=\mathbb{P}^2$.

In this paper, we show that if the canonical map is an abelian cover
over $\mathbb{P}^2$ then the only possible canonical degrees of a
surface of general type are $2, 3, 4, 6, 8, 9, 16$ by explicit
constructions. Moreover, by using abelian cover, we list the
defining equations for Perssson's surface and Tan's surfaces with
odd canonical degrees.

We list our main theorems as follows.

\begin{theorem}
Assume $\varphi: X\longrightarrow\mathbb{P}^2$ is an abelian cover.
\begin{enumerate}
\item[(1)] Let $\deg\varphi=16$, then $\varphi=\varphi_{K_X}$ if and only if $X$
is defined by
\begin{equation*}
z_1^2=\ell_1\ell_3\ell_4\ell_7,\hskip0.3cm
z_2^2=\ell_1\ell_2\ell_4\ell_5,\hskip0.3cm
z_3^2=\ell_1\ell_2\ell_3\ell_6,\hskip0.3cm
z_4^2=\ell_2\ell_5\ell_6\ell_8.\label{8}
\end{equation*}
In particular, the surface defined by the first three equations is a
Campedelli surface.
\item[(2)]Let $\deg\varphi=9$, then $\varphi=\varphi_{K_X}$ if and only if
$X$ is defined by $z_1^3=a_1a_2^2,z_2^3=a_1a_2a_3$.
\item[(3)] Let $\deg\varphi=8$, then $\varphi=\varphi_{K_X}$ if and only if
$X$ is defined by either $z_1^2=\ell_1\ell_2\ell_7\ell_8,\,\,
z_2^2=\ell_3\ell_4\ell_7\ell_8,\,\,z_3^2=\ell_5\ell_6\ell_7\ell_8$,
or $z_1^2=a_1a_4,z_2^2=a_2a_4,z_3^2=a_3a_4$, or
$z_1^2=a_1a_2,z_2^4=a_2^3a_3$.
\item[(4)] Let $\deg\varphi=6$, then
$\varphi=\varphi_{K_X}$ if and only if $X$ is defined by
$z_1^2=a_1a_2,z_2^3=a_2a_3^2$.
\item[(5)] Let $\deg\varphi=4$, then
$\varphi=\varphi_{K_X}$ if and only if
 $X$ is defined by either
 $z^4=a_1^2b_2^3$,or $z_1^2=b_1$,$z_2^2=b_2$.
\item[(6)] Let $\deg\varphi=3$, then $\varphi=\varphi_{K_X}$ if and only if
 $X$ is defined by either
 $z^3=c_1^2$,or $z^3=c_2$.
\item[(7)] Let $\deg\varphi=2$, then $\varphi=\varphi_{K_X}$ if and only if
 $X$ is
 $z^2=h$.
\end{enumerate}
Here $\ell_i$'s define different lines and there are at most three
lines among them passing through one point, for $i=1\cdots 8$;
$a_i$'s are reduced of degree $2$; $b_i$'s are reduced of degree
$4$; $c_1, c_2$ are reduced of degree $6$; $h$ is reduced of degree
$8$ and all the irreducible components of $a_i$'s , $b_i$'s, $c_1,
c_2$ and $h$ are simply normal crossing.
\end{theorem}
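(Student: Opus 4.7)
The plan is to use Pardini's structure theorem for abelian covers as the organizing framework and to convert the hypothesis $\varphi=\varphi_{K_X}$ into numerical constraints on the building data. An abelian $G$-cover $\varphi\colon X\to\mathbb{P}^2$ is determined by line bundles $L_\chi$ for $\chi\in G^{*}$ and by reduced branch components $D_{H,\psi}$ (one for each pair consisting of a cyclic subgroup $H<G$ and a generator $\psi$ of $H^{*}$), subject to Pardini's fundamental relations. Under this correspondence one has $\varphi_{*}\mathcal{O}_X=\bigoplus_\chi L_\chi^{-1}$ and $\varphi_{*}\omega_X=\bigoplus_\chi \omega_{\mathbb{P}^2}\otimes L_\chi$, so all numerical invariants of $X$ are expressible in terms of the integers $n_\chi$ (where $L_\chi=\mathcal{O}(n_\chi)$) and the degrees of the $D_{H,\psi}$.

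Next I would impose $\omega_X=\varphi^{*}\mathcal{O}(1)$. The projection formula together with $p_g(X)=3$ gives
\[
3\;=\;h^{0}(\omega_X)\;=\;\sum_\chi h^{0}\bigl(\mathcal{O}_{\mathbb{P}^2}(1-n_\chi)\bigr),
\]
forcing $n_\chi\ge 2$ for every nontrivial $\chi$. Combining this with the ramification identity $R\equiv\varphi^{*}\mathcal{O}(4)$, expanded through Pardini's relations, yields a linear system in the $n_\chi$'s and the $\deg D_{H,\psi}$'s. The Beauville bound gives $|G|=\deg\varphi=K_X^{2}\le 36$, and $q(X)=0$ is automatic once the branch locus spans. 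Together, these cut the solution space down to a finite list of admissible combinatorial types for each $G$.

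The remainder is a case analysis by the group $G$, running through the abelian groups of order at most $36$ and solving the resulting integer system. For each admissible configuration I would write the cover in the standard presentation $z_i^{\mathrm{ord}(g_i)}=\prod \ell_j^{e_{ij}}$ adapted to a choice of generators of $G$, which recovers the defining equations in the statement. The delicate case is $|G|=16$: the constraints force $G=\mathbb{Z}_2^{\oplus 4}$, all nontrivial $n_\chi$ equal to $2$, and the branch locus to be a union of eight lines; the incidence condition ``at most three lines through any point'' then emerges from the requirement that $X$ have at worst canonical singularities over the multiple points of the line configuration, so that $K_X$ descends correctly under the minimal resolution. This combinatorial reduction of the eight-line configuration, together with verifying its internal consistency with Pardini's relations (in particular, that the four $z_i$-equations can be chosen compatibly), is where I expect the main work to lie; the other degrees reduce rapidly once $n_\chi$ and $|G|$ are pinned down, essentially leaving a single choice of generators for the character lattice.
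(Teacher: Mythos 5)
Your proposal follows essentially the same route as the paper: use the structure theorem for abelian covers to write $\varphi_*\mathcal{O}_X$ as a sum of line bundles, pin down that decomposition from $p_g=3$ and duality (the paper uses a $p_2$ computation where you use $\varphi_*\omega_X$, but the conclusion $\mathcal{O}\oplus\mathcal{O}(-2)^{\oplus d-2}\oplus\mathcal{O}(-4)$ is the same), translate this into an integer linear system on the branch-divisor degrees, solve it group by group, and finally control the line configuration (at most triple points) by analyzing the singularities and $K^2$ after blowing up. The plan is sound and matches the paper's proof in all essential steps, differing only in using Pardini's character-indexed building data rather than the group-element-indexed data of the reference the authors cite.
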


\begin{theorem}
If $$\varphi=\varphi_{K_X}: X\longrightarrow\mathbb{P}^2,$$ is an
abelian cover, then $d$ is equal to $2,3,4,6,8,9$, or $16$. In
particular, if the canonical degree of $X$ is $36$, then
$\varphi_{K_X}$ can not be an abelian cover.
\end{theorem}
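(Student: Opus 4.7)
The plan is to derive the numerical invariants of $X$ from the hypothesis that $\varphi_{K_X}$ is an abelian cover of $\mathbb{P}^2$, apply the ramification formula to collapse the data to a single degree equation, and then combine this with Theorem~1.1 to pin down $d$.

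Since an abelian cover is in particular a finite morphism, $|K_X|$ must be base-point free and $K_X \sim \varphi^* H$, where $H$ is the hyperplane class; surjectivity of $\varphi$ onto $\mathbb{P}^2 \subset \mathbb{P}^{p_g-1}$ forces $p_g(X) = 3$. The Pardini decomposition $\varphi_* \mathcal{O}_X = \bigoplus_\chi L_\chi^{-1}$ with each $L_\chi \in \mathrm{Pic}(\mathbb{P}^2)$, together with the vanishing of $H^1$ for any line bundle on $\mathbb{P}^2$, gives $q(X)=0$ and $\chi(\mathcal{O}_X)=4$. Combined with $K_X^2 = \deg\varphi \cdot H^2 = d$, the Miyaoka--Yau inequality (or Beauville's bound) confines $d$ to $\{2,3,\ldots,36\}$.

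Next I would apply the ramification formula to the Galois cover $\varphi$. Writing the branch locus as $B = \sum_i B_i \subset \mathbb{P}^2$, with $B_i$ having inertia of order $m_i \ge 2$, the Galois-cover formula reads
\[
K_X \sim \varphi^*\!\Bigl( K_{\mathbb{P}^2} + \sum_i (1 - 1/m_i)\, B_i \Bigr).
\]
Equating with $\varphi^* H$ in $\mathrm{Pic}(\mathbb{P}^2) = \mathbb{Z} H$ and taking degrees collapses the class equation to the single relation
\[
4 = \sum_i (1 - 1/m_i)\, \deg B_i,
\]
subject to the side condition that each $m_i$ divides the exponent of the Galois group $G$, with $|G| = d$.

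The endgame is enumeration. For each abelian $G$ with $|G| \le 36$ one lists the finitely many admissible multisets $\{(m_i, \deg B_i)\}$, and for each checks via Pardini's building data $(L_\chi, D_{H,\psi})$ whether it extends to an actual cover satisfying $\chi(\mathcal{O}_X) = 4$ and $\sum_\chi h^0(L_\chi(-3)) = 3$. Theorem~1.1 realizes every $d \in \{2,3,4,6,8,9,16\}$; the remaining task is to eliminate all other $d$. I expect the main obstacle to be the combinatorial case analysis for $d \in \{12,16,18,24,27,32,36\}$, where several non-isomorphic abelian groups occur: in most cases parity or $p$-adic valuation arguments applied to the degree equation give quick obstructions, but the remaining candidates must be ruled out by showing that every potential building data violates either the $\chi(\mathcal{O}_X) = 4$ condition or the simple-normal-crossings hypothesis on the branch. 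The final assertion about $d = 36$ is then an immediate corollary, since $36$ is not among the seven admissible values.
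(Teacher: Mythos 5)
Your overall strategy coincides with the paper's: use $K_X=\varphi^*H$ to get $p_g=3$, $q=0$, $\chi(\mathcal{O}_X)=4$, $K_X^2=d\le 36$, and then reduce to a finite check over the building data of abelian covers of $\mathbb{P}^2$ of each admissible order. The realization of $d\in\{2,3,4,6,8,9,16\}$ by explicit equations is also exactly how the paper proceeds.

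However, the elimination step as you describe it has a genuine gap: the tools you name are too weak to rule anything out. The collapsed ramification identity $4=\sum_i(1-1/m_i)\deg B_i$ is a single necessary condition that is satisfied for essentially every abelian $G$ (for instance, for any $G\cong\mathbb{Z}_2^{\oplus k}$ it just says the total branch degree is $8$), so no parity or $p$-adic argument on it alone can kill $d=12,18,24,32,\dots$. Likewise $\chi(\mathcal{O}_X)=4$ cannot serve as a filter, since you yourself derive it unconditionally from $p_g=3$ and $q=0$. The decisive constraint, which your proposal does not reach, is the exact splitting type $\varphi_*\mathcal{O}_X=\mathcal{O}_{\mathbb{P}^2}\oplus\mathcal{O}_{\mathbb{P}^2}(-2)^{\oplus d-2}\oplus\mathcal{O}_{\mathbb{P}^2}(-4)$ (the paper's Lemma 3.2). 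Your two conditions ($\deg L_\chi\ge 2$ from $p_g=3$ and $\sum_\chi h^0(L_\chi(-3))=3$ from Serre duality) only narrow the summands to two possible patterns — one $\mathcal{O}(-4)$ and the rest $\mathcal{O}(-2)$, or three $\mathcal{O}(-3)$ and the rest $\mathcal{O}(-2)$ — and both patterns give $\chi=4$; you need the second plurigenus $p_2=K_X^2+\chi(\mathcal{O}_X)=d+4$ to discard the second pattern. Once the splitting type is pinned down, it translates (via the formula $L_g=-\sum g_iL_i+[\sum\frac{g_i}{n_i}D_i]$) into one Diophantine equation \emph{per character} on the degrees of the branch components — the paper's system $(*)$ — and it is this full system, not the single degree-$4$ relation, that has no solution for the excluded values of $d$. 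I would also note that, even granting the correct system, both you and the paper ultimately defer the case-by-case verification to an unstated computation.
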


In section 2. we present the main facts on abelian covers which are the key points for solving our problem.  In section 3, we prove our main theorems. The defining equations of Tan's examples will be given in section 4.

The authors would like to thank Professor S.-L. Tan for valuable
discussions and noticing us his examples(\cite{Tan}). We are also
grateful to Professor R. Pardini for having told us some relevant
references and their paper about classifying Compedelli surfaces
with fundamental group of order $8$ (\cite{ML-P-R}).
\section{Abelain covers}
The theory of cyclic covers of algebraic surfaces was studied first
by Comessatti in \cite{Com}. Then F. Catanese (\cite{Cat2}) studied
smooth abelian covers in the case $(\mathbb{Z}_2)^{\oplus2}$ and R.
Pardini (\cite{Par2}) analyzed the general case. In this section, we
shall recall some basic definitions and results for abelian covers
which will facilitate our subsequent discussion. Since our point of
view is to find the defining equations for covering surfaces by
explicit calculation, we use the expressions and notations appearing
in \cite{Gao}.

Let $\varphi:X\to Y$ is an abelian cover associated to abelian group
$G\cong\mathbb Z_{n_1}\oplus\cdots\oplus\mathbb Z_{n_k}$, i.e.,
function field $\mathbb C(X)$ of $X$ is an abelian extension of the
rational function field $\mathbb C(Y)$ with Galois group $G$.  Without lose of
generality, we can assume $n_1|n_2\cdots|n_k$.

\begin{definition}
The dates of abelian cover over $Y$ with group $G$ are $k$ effective
divisors $D_1$, $\cdots$, $D_k$ and $k$ linear equivalent relations
\[D_1\sim n_1L_1, \cdots, D_k\sim n_kL_k.\]
\end{definition}

Let $\mathscr{L}_i=\mathscr{O}_Y(L_i)$ and $f_i$ be the defining
equation of $D_i$, i.e., $D_i=\text{div}(f_i)$, where $f_i\in H^0(Y,
\mathscr{L}_i^{n_i})$. Denote
$\textbf{V}(\mathscr{L}_i)=\textbf{Spec}S(\mathscr{L}_i)$ to be the
line bundle corresponding to $\mathscr{L}_i$, where
$S(\mathscr{L}_i)$ is the sheaf of symmetric $\mathscr{O}_Y$
algebra. Let $z_i$ be the fiber coordinate of
$\textbf{V}(\mathscr{L}_i)$. Then the abelian cover can be realized
by the normalizing of surface $V$ defined by the system of equations
\[z_1^{n_1}=f_1, \cdots, z_k^{n_k}=f_k.\]
So we have the following diagram:

\begin{diagram}
X     & \rTo^{\text{normalization}}   & V & \rdTo^f \rInto  &\oplus_{i=1}^k\textbf{V}(\mathscr{L}_i)\\
      & \rdTo(4,2)_\varphi            &   & \rdTo           &\dTo^p\\
      &                               &   &                 &Y .\\
\end{diagram}
Sometimes we call $X$ is defined by these equations if there is no
confusions in the context.

 We summerize our main results as follows.
\begin{theorem}\label{F}(See \cite{Gao}) Denote by $[Z]$ the integral part of a
$\mathbb Q$-divisor $Z$, $-L_g=-\sum\limits_{i=1}^k g_i{L}_i+
\left[\sum\limits_{i=1}^{k}\frac{g_i}{n_i}D_i\right]$.Then
\begin{eqnarray}
\varphi_*\mathcal{O}_X&=\bigoplus\limits_{{g\in G}}\mathcal
O_Y(-L_g).
\end{eqnarray}
where $g=(g_1,\cdots, g_k)\in G$.

\end{theorem}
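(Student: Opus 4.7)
The plan is to decompose $\varphi_*\mathcal{O}_X$ into eigensheaves under the Galois $G$-action and to identify each summand with the predicted line bundle. Because $G$ is abelian and we work over $\mathbb{C}$, the action on $\varphi_*\mathcal{O}_X$ yields a character decomposition $\varphi_*\mathcal{O}_X=\bigoplus_{\chi\in\hat{G}}(\varphi_*\mathcal{O}_X)_\chi$, and the chosen generators of $G\cong\mathbb{Z}_{n_1}\oplus\cdots\oplus\mathbb{Z}_{n_k}$ induce an identification $\hat{G}\cong G$ through $\chi_g(e_i)=\zeta_{n_i}^{g_i}$. Outside the branch locus $\varphi$ is an étale $G$-torsor, so each eigenpiece is invertible there; normality of $Y$ together with Hartogs' extension then promotes each $(\varphi_*\mathcal{O}_X)_{\chi_g}$ to a line bundle $M_g$ on $Y$, which I want to show equals $\mathcal{O}_Y(-L_g)$.

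Next I would produce an explicit rational section of $M_g$ from the monomial $z^g:=z_1^{g_1}\cdots z_k^{g_k}$. By construction $z^g$ transforms under $G$ by $\chi_g$, so it lies in the $\chi_g$-isotypic component; viewing the $z_i$ as fiber coordinates of $\mathbf{V}(\mathcal{L}_i)$ and using $z_i^{n_i}=f_i\in H^0(Y,\mathcal{L}_i^{n_i})$, one obtains the $\mathbb{Q}$-divisor identity $\mathrm{div}_X(z^g)=\varphi^{*}\bigl(\sum_{i} g_i D_i/n_i\bigr)$. Any element of $M_g$ over an open subset of $Y$ is then of the form $h\cdot z^g$ with $h\in\mathbb{C}(Y)$, and it is regular on $X$ precisely when $\mathrm{div}_Y(h)+\sum_{i} g_i D_i/n_i\ge 0$ as a $\mathbb{Q}$-divisor on $Y$.

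The last step is to convert this $\mathbb{Q}$-divisor inequality into an integral one prime by prime. At a prime divisor $\Delta$ of $Y$ with $e_i=\mathrm{mult}_\Delta D_i$, the condition reads $\mathrm{ord}_\Delta(h)\ge -\sum_i g_i e_i/n_i$, and since $\mathrm{ord}_\Delta(h)\in\mathbb{Z}$ it sharpens to $\mathrm{ord}_\Delta(h)\ge -\bigl\lfloor\sum_i g_i e_i/n_i\bigr\rfloor$. Globalising, the admissible $h$'s are exactly the sections of $\mathcal{O}_Y\bigl(\bigl[\sum_i g_i D_i/n_i\bigr]\bigr)$, and combining this with the natural twist by $\mathcal{L}_1^{g_1}\otimes\cdots\otimes\mathcal{L}_k^{g_k}$ carried by $z^g$ yields $M_g=\mathcal{O}_Y(-L_g)$ with $L_g$ as stated. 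The main obstacle will be the local integral-closure calculation at each branch prime: one has to verify that normalising $V$ introduces no sections beyond those accounted for by the floor function, which rests on a careful analysis of the ramification behaviour of $\varphi$ near each $\Delta$ and is where the tacit reducedness and normal-crossing hypotheses on the $D_i$ enter via control of $\gdc(e_1,\ldots,e_k,n_1,\ldots,n_k)$ at each codimension-one point.
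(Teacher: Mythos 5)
The paper offers no proof of this theorem --- it is quoted from the reference \cite{Gao} --- so there is nothing in-paper to compare against; what you have written is the standard eigensheaf argument (Pardini's), which is essentially the proof given in that reference, and its outline is sound. Two remarks. First, the ``main obstacle'' you flag at the end is not a real obstacle: since $X$ is normal, a rational function on $X$ is regular exactly when its divisor is effective, so the codimension-one computation $n_i\,\mathrm{ord}_{\bar\Delta}(z_i)=e_{\bar\Delta}\,\mathrm{mult}_{\Delta}(D_i)$ (valid at every prime $\bar\Delta$ of $X$ over a prime $\Delta\subset Y$, whatever the ramification index $e_{\bar\Delta}$ happens to be) already determines the eigensheaf completely; the reducedness and normal-crossing hypotheses on the $D_i$ play no role in this particular statement (they matter for the smoothness of $X$ and the canonical bundle formula, not for $\varphi_*\mathcal{O}_X$), and no $\gcd$ analysis is needed here. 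Second, your computation yields the eigensheaf $\mathcal{O}_Y\bigl(\bigl[\sum_i g_iD_i/n_i\bigr]\bigr)\otimes\bigotimes_i\mathcal{L}_i^{-g_i}$, i.e.\ $L_g=\sum_i g_iL_i-\bigl[\sum_i \tfrac{g_i}{n_i}D_i\bigr]$, which is the \emph{negative} of the formula printed in the theorem; the printed sign is a misprint (test the double cover $z^2=f$ with $f\in H^0(\mathcal{L}^2)$ and $D$ reduced, where the answer must be $\mathcal{O}_Y\oplus\mathcal{O}_Y(-L)$, and compare with the degree relation $n_il_{e_i}=\sum_\alpha\alpha_ix_\alpha$ used later in the paper, which forces $\deg L_{e_i}=\deg L_i>0$). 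So the phrase ``with $L_g$ as stated'' should read ``with $L_g$ as used in the rest of the paper''; your argument in fact detects and corrects the sign, rather than reproducing the displayed formula literally.
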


 So the
decomposition of $\varphi_*\mathcal{O}_X$ is totally determined by
the abelian cover.

By Theorem \ref{F} we can get following corollary.
\begin{corollary}\label{h}
If $X$ is non-singular, $D$ is the divisor on $Y$, then
$$h^i(X, \varphi^*\mathcal{O}_Y(D))=\sum\limits_{g\in G}h^i(Y, \mathcal{O}_Y(D-L_g))$$
\end{corollary}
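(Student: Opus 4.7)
The plan is to reduce the computation to cohomology on $Y$ by combining two standard ingredients with the decomposition supplied by Theorem~\ref{F}. The two ingredients are (i) the vanishing of higher direct images under a finite morphism, and (ii) the projection formula for a locally free sheaf. The Galois cover $\varphi\colon X\to Y$ is finite, so $R^j\varphi_{*}\mathcal{F}=0$ for every coherent sheaf $\mathcal{F}$ on $X$ and every $j>0$. Consequently the Leray spectral sequence
$$E_2^{p,q}=H^p\bigl(Y,R^q\varphi_{*}\varphi^{*}\mathcal{O}_Y(D)\bigr)\;\Longrightarrow\; H^{p+q}\bigl(X,\varphi^{*}\mathcal{O}_Y(D)\bigr)$$
degenerates at $E_2$, yielding $H^i(X,\varphi^{*}\mathcal{O}_Y(D))\cong H^i(Y,\varphi_{*}\varphi^{*}\mathcal{O}_Y(D))$ for every $i$.

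Next, since $\mathcal{O}_Y(D)$ is locally free of rank one, the projection formula applies without any flatness assumption and gives
$$\varphi_{*}\varphi^{*}\mathcal{O}_Y(D)\;\cong\;\varphi_{*}\mathcal{O}_X\otimes_{\mathcal{O}_Y}\mathcal{O}_Y(D).$$
I would then invoke Theorem~\ref{F} to decompose $\varphi_{*}\mathcal{O}_X=\bigoplus_{g\in G}\mathcal{O}_Y(-L_g)$; tensoring with $\mathcal{O}_Y(D)$ yields
$$\varphi_{*}\varphi^{*}\mathcal{O}_Y(D)\;\cong\;\bigoplus_{g\in G}\mathcal{O}_Y(D-L_g).$$
Taking $i$-th cohomology of both sides commutes with the finite direct sum, and passing to $\mathbb{C}$-dimensions produces the claimed identity.

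The main obstacle, honestly, is not really an obstacle: the whole argument is formal once Theorem~\ref{F} is in hand. The only things to verify are that the projection formula is available (it is, because $\mathcal{O}_Y(D)$ is a line bundle) and that $R^{j}\varphi_{*}$ vanishes for $j>0$ (it does, because $\varphi$ is finite). The hypothesis that $X$ is non-singular enters only in ensuring that $\varphi^{*}\mathcal{O}_Y(D)$ behaves as a genuine line bundle on $X$, so that the cohomology groups on the left-hand side are the usual ones; no other subtlety is needed.
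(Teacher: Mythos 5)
Your argument is correct and is exactly the deduction the paper has in mind: the paper offers no written proof beyond "By Theorem \ref{F}," and the intended details are precisely the vanishing of higher direct images for the finite map $\varphi$, the projection formula for the line bundle $\mathcal{O}_Y(D)$, and the direct-sum decomposition of $\varphi_*\mathcal{O}_X$ from Theorem \ref{F}. Nothing further is needed.
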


The following result will be used to calculate the ramification
index.
\begin{theorem} \label{branch}Let $P$ be an irreducible and reduced hypersurface in $Y$,
let $\bar P=\pi^{-1}(P)$ be the reduced preimage of $P$ in $X$, and
let $a_i$ be the multiplicity of $P$ in $D_i=\textrm{div}(f_i)$.
Then
$$
\pi^*P=\dfrac{|G|}{d_P}\bar P,
$$
where
$$ d_P= \gcd\left(\,|G|, \ |G|\dfrac{a_1}{n_1}
,\ \cdots, \ |G|\dfrac{a_k}{n_k}  \, \right)$$
 is the number of points in the preimage of a generic point on
 $P$.
\end{theorem}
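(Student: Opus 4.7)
The plan is to localize at the generic point $\eta_P$ of $P$, reduce to a pure Kummer extension via strict henselization, and then count primes of the normalization by elementary Kummer theory. Let $R=\mathcal{O}_{Y,\eta_P}$, a DVR with uniformizer $t$ and valuation $v$; by hypothesis $v(f_i)=a_i$, so $f_i=u_it^{a_i}$ for some $u_i\in R^{\times}$. Passing to the strict henselization $R^{\mathrm{sh}}$ preserves the ramification indices of primes above $\eta_P$, makes the residue field algebraically closed (hence containing all roots of unity), and by Hensel's lemma makes each $u_i$ an $n_i$-th power. After the substitution $z_i\mapsto z_i/u_i^{1/n_i}$ the defining equations become $z_i^{n_i}=t^{a_i}$.

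Fix an $N$-th root $t^{1/N}$ with $N=\mathrm{lcm}(n_1,\dots,n_k)$ in $\overline{K^{\mathrm{sh}}}$, where $K^{\mathrm{sh}}=\mathrm{Frac}(R^{\mathrm{sh}})$. The $|G|$ geometric points of the normalization over $\eta_P$ are parametrized by tuples $(\omega_1,\dots,\omega_k)\in\mu_{n_1}\times\cdots\times\mu_{n_k}$ via $z_i=\omega_i\cdot t^{a_i/n_i}$, and the primes of the normalization over $\eta_P$ correspond to orbits of the absolute Galois group $\mathrm{Gal}(\overline{K^{\mathrm{sh}}}/K^{\mathrm{sh}})\cong\hat{\mathbb{Z}}$. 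Its topological generator $\sigma\colon t^{1/N}\mapsto\zeta_N t^{1/N}$ acts by $(\omega_i)\mapsto(\zeta_{n_i}^{a_i}\omega_i)$. Identifying $\mu_{n_i}$ with $\mathbb{Z}/n_i$, these orbits are cosets of the cyclic subgroup of $G$ generated by $(a_1,\dots,a_k)$; each has the same size, namely the ramification index
\[ e \;=\; \mathrm{lcm}_i\!\left(\tfrac{n_i}{\gcd(n_i,a_i)}\right). \]
The number of orbits is $|G|/e$, and this is the number of primes of the normalization over $\eta_P$.

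It remains to reconcile $|G|/e$ with the gcd expression. Setting $M=\gcd(N,Na_1/n_1,\dots,Na_k/n_k)$, a direct check gives $N/M=e$, whence
\[ \frac{|G|}{e} \;=\; \frac{|G|\,M}{N} \;=\; \gcd\!\left(|G|,\,|G|\tfrac{a_1}{n_1},\dots,|G|\tfrac{a_k}{n_k}\right)\;=\;d_P. \]
Since $G$ acts transitively on the components $\bar P_1,\dots,\bar P_{d_P}$ of $\bar P$ with decomposition (= inertia, after henselization) group of size $e$, the formula $\pi^*P=e\,\bar P=(|G|/d_P)\bar P$ follows.

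The main obstacle I anticipate is bookkeeping rather than ideas: one must verify carefully that strict henselization preserves the ramification index $e$ (so that $\pi^*P=e\bar P$ is intrinsic to $Y$), correctly match the $\hat{\mathbb{Z}}$-orbits on the $|G|$ geometric points with primes of the normalization, and interpret "number of preimage points of a generic point on $P$" as the geometric count $|G|/e$. Once this dictionary is in place, the remainder reduces to the elementary gcd/lcm identity above.
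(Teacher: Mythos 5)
The paper states this result without proof, importing it from the reference [Gao], so there is no in-text argument to compare against; judged on its own, your localization-plus-Kummer-theory proof is correct. The reduction to $z_i^{n_i}=t^{a_i}$ over the strict henselization is legitimate (characteristic $0$, Hensel's lemma absorbs the units), the identification of primes over $\eta_P$ with $\hat{\mathbb{Z}}$-orbits, i.e.\ cosets of the cyclic subgroup generated by $(a_1,\dots,a_k)$, is right, and the gcd/lcm bookkeeping $|G|/e=\gcd\left(|G|,|G|a_1/n_1,\dots,|G|a_k/n_k\right)$ checks out. One small inaccuracy in your last paragraph: $\bar P$ need not have $d_P$ irreducible components. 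The number of components of $\bar P$ in $X$ is $r=|G|/(ef)$, where $f$ is the residue-field degree $[k(\bar P_j):k(P)]$, which can exceed $1$ (e.g.\ a double cover $z^2=f$ with $P$ not in the branch locus and $f|_P$ a nonsquare gives one component but $d_P=2$); what equals $d_P=|G|/e=rf$ is the number of preimages of a \emph{general closed point} of $P$, exactly as the theorem asserts and as your own strictly-henselized orbit count computes. This slip is harmless for the displayed formula, since $\pi^*P=e\bar P$ depends only on the common ramification index $e=|G|/d_P$ of the components, not on their number.
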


\section{canonical map}
Let X be a surface of general type whose canonical map
$\varphi_{K_X}$ is a generically finite cover of a surface in
$\mathbb{P}^2$.

 \begin{lemma} Assume $\varphi: X\rightarrow \mathbb{P}^2$ is a
 finite cover of degree $36$,
 $\varphi_{K_X}=\varphi$ if and only if
$$\varphi_*\mathcal{O}_X=\mathcal{O}_{\mathbb{P}^2}\oplus E\oplus
\mathcal{O}_{\mathbb{P}^2}(-4),$$

 where $E$ is a rank $34$ bundle satisfying
 $E^\vee\cong E(4)$ and $H^0(E(1))=0$.
\end{lemma}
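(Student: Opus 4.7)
The plan is to interpret the lemma as a duality statement about the pushforward sheaf $\mathcal F := \varphi_*\mathcal O_X$ on $\mathbb P^2$. The pivotal identity sitting at the heart of both directions is
\[
\mathcal F^\vee \;\cong\; \mathcal F(4),
\]
which falls out of combining Grothendieck duality for the finite morphism $\varphi$, giving $\varphi_*\omega_X \cong \mathcal F^\vee\otimes\omega_{\mathbb P^2}=\mathcal F^\vee(-3)$, with the projection formula $\varphi_*\omega_X\cong\mathcal F(1)$ coming from $\omega_X\cong\varphi^*\mathcal O(1)$.

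For the ``only if'' direction, the assumption $\varphi_{K_X}=\varphi$ of degree $36$ allows me to invoke Beauville's theorem from the introduction: $K_X^2=36$, $p_g=3$, $q=0$, $|K_X|$ is base point free, and $\omega_X\cong\varphi^*\mathcal O(1)$, so the displayed identity holds. The trace map gives the canonical splitting $\mathcal F=\mathcal O\oplus\mathcal F_0$. Since $X$ is a normal surface, $\mathcal F$ is locally free, so by Krull--Schmidt for coherent sheaves on $\mathbb P^2$ I may write $\mathcal F\cong\mathcal O^{\oplus a}\oplus\mathcal O(-4)^{\oplus b}\oplus E$ with $E$ containing neither $\mathcal O$ nor $\mathcal O(-4)$ as a direct summand. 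Imposing $\mathcal F^\vee\cong\mathcal F(4)$ and cancelling matched indecomposable summands then forces $a=b$ and $E^\vee\cong E(4)$. Taking global sections of $\mathcal F(1)$ and using $p_g=3$, $H^0(\mathcal O(-3))=0$, and $H^0(E(1))\ge 0$ yields
\[
3\;=\;h^0(\mathcal F(1))\;=\;3a+h^0(E(1)),
\]
which pins down $a=b=1$ and $H^0(E(1))=0$, exactly the asserted decomposition.

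For the ``if'' direction the same identity runs backward: the hypothesis $E^\vee\cong E(4)$ immediately gives $\mathcal F^\vee\cong\mathcal F(4)$, and Grothendieck duality together with the projection formula converts this back into $\varphi_*\omega_X\cong\mathcal F(1)\cong\varphi_*(\varphi^*\mathcal O(1))$; taking global sections and using $H^0(E(1))=0$ yields $p_g=h^0(\omega_X)=3$. Once one knows $\omega_X\cong\varphi^*\mathcal O(1)$ as line bundles on $X$, the pullback $\varphi^*\colon H^0(\mathbb P^2,\mathcal O(1))\hookrightarrow H^0(X,\omega_X)$ is an injection (because $\varphi$ is surjective) between two $3$-dimensional spaces, hence an isomorphism, so the linear system defining $\varphi$ coincides with $|K_X|$ and $\varphi=\varphi_{K_X}$. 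The main obstacle I anticipate is precisely this last upgrade from the pushforward-level identification to the line bundle identification: writing $L:=\omega_X\otimes\varphi^*\mathcal O(-1)$, one only sees a priori that $\varphi_*L\cong\mathcal F$, and concluding $L\cong\mathcal O_X$ requires additional naturality input, such as tracking the distinguished section of $\varphi_*\omega_X$ produced by the trace through the duality isomorphism $\mathcal F^\vee\cong\mathcal F(4)$, or, in the abelian-cover setting of this paper, comparing the character-by-character decomposition supplied by Theorem~\ref{F}.
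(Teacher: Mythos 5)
Your ``only if'' direction is essentially the paper's argument (relative duality plus the projection formula applied to $\omega_X\cong\varphi^*\mathcal O(1)$, then Krull--Schmidt), and you are in fact more careful than the paper in extracting $a=b=1$ and $H^0(E(1))=0$ from $p_g=3$. The problem is the ``if'' direction, where you correctly identify the obstacle --- that the sheaf-level isomorphism $\varphi_*\omega_X\cong\varphi_*(\varphi^*\mathcal O(1))$ does not by itself give $\omega_X\cong\varphi^*\mathcal O(1)$ on $X$ --- but you do not close it. Neither of your two suggested fixes is carried out, and the second one (using the character decomposition of Theorem~\ref{F}) is not even available here, since this lemma is stated for an arbitrary finite cover of degree $36$, not an abelian one. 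As it stands, the converse implication is not proved.

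The paper's route around this is worth internalizing because it avoids the naturality issue entirely: instead of trying to show $L:=\omega_X\otimes\varphi^*\mathcal O(-1)$ is \emph{trivial}, one only shows it is \emph{effective}, and then kills the effective divisor numerically. Concretely, duality and the projection formula give
$\varphi_*(\omega_X\otimes\varphi^*\mathcal O(-1))\cong(\varphi_*\mathcal O_X)^\vee\otimes\mathcal O(-4)\cong\mathcal O(-4)\oplus E\oplus\mathcal O$,
which visibly has a nonzero global section; hence $H^0(X,L)\neq 0$ and $K_X=\varphi^*H+Z$ with $Z\geqslant 0$. Since $p_g(X)=h^2(\varphi_*\mathcal O_X)=3+h^0(E(1))=3$ while $h^0(\varphi^*H)\geqslant 3$, the divisor $Z$ is the fixed part of $|K_X|$. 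Then $K_X^2\geqslant(\varphi^*H)^2=36=9(p_g+1)\geqslant 9\chi(\mathcal O_X)$, so Miyaoka--Yau forces equality throughout, giving $q=0$, $K_X\cdot Z=\varphi^*H\cdot Z=0$, and hence $Z=0$ by the Hodge index theorem; therefore $|K_X|=\varphi^*|H|$ and $\varphi_{K_X}=\varphi$. (Note this step uses that $X$ is a minimal surface of general type, a standing hypothesis of the paper, to get $K_X$ nef and to apply Miyaoka--Yau.) If you replace the last paragraph of your proposal with this effectivity-plus-numerics argument, the proof is complete.
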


\begin{proof}If $\varphi$ is a finite cover, we take $E_0$ as the trace free part of
$\varphi_*\mathcal{O}_X$, so
$\varphi_*\mathcal{O}_X=\mathcal{O}_{\mathbb{P}^2}\oplus E_0$. By
relative duality,
$$\varphi_*\omega_X \cong (\varphi_*\mathcal{O}_X)^\vee \otimes \omega_{\mathbb{P}^2}
= \mathcal{O}_{\mathbb{P}^2}(-3)\oplus  E_0^\vee(-3).$$

Since $\omega_X=\varphi^*(\mathcal{O}_{\mathbb{P}^2}(1))$, by
projection formula,
$\varphi_*\omega_X=\varphi_*\varphi^*(\mathcal{O}_{\mathbb{P}^2}(1))=\mathcal{O}_{\mathbb{P}^2}(1)\otimes
\varphi_*\mathcal{O}_X$, so
$$\mathcal{O}_{\mathbb{P}^2}(1) \oplus  E_0(1)\cong
\mathcal{O}_{\mathbb{P}^2}(-3)\oplus  E_0^\vee(-3).$$

We see that $E_0=E\oplus \mathcal{O}_{\mathbb{P}^2}(-4)$, and
$E^\vee=E(4)$.

On the other hand, if
$\varphi_*\mathcal{O}_X=\mathcal{O}_{\mathbb{P}^2}\oplus E\oplus
\mathcal{O}_{\mathbb{P}^2}(-4)$, then
$$\begin{array}{ll}
\varphi_*(\omega_X\otimes
\varphi^*\mathcal{O}_{\mathbb{P}^2}(-1))&=\varphi_*\omega_X\otimes\mathcal{O}_{\mathbb{P}^2}(-1)\\
&=(\varphi_*\mathcal{O}_X)^\vee\otimes
\omega_{\mathbb{P}^2}\otimes\mathcal{O}_{\mathbb{P}^2}(-1)\\
&=(\mathcal{O}_{\mathbb{P}^2}\oplus E\oplus
\mathcal{O}_{\mathbb{P}^2}(-4))^\vee\otimes\mathcal{O}_{\mathbb{P}^2}(-4)\\
&=(\mathcal{O}_{\mathbb{P}^2}\oplus E^\vee\oplus
\mathcal{O}_{\mathbb{P}^2}(4))\otimes\mathcal{O}_{\mathbb{P}^2}(-4)\\
&=\mathcal{O}_{\mathbb{P}^2}(-4)\oplus E \oplus
\mathcal{O}_{\mathbb{P}^2}\\
\end{array}$$
$\mathcal{O}_{\mathbb{P}^2}(-4)\oplus E \oplus
\mathcal{O}_{\mathbb{P}^2}$ admits a non zero global section, so
$\omega_X\otimes \varphi^*\mathcal{O}_{\mathbb{P}^2}(-1)$ admits
also non zero global section. Namely there is an effective divisor
$Z$ such that $K_X=\varphi^*H+Z$, where $H$ is the divisor of the
hyperplane. Because
$$p_g(X)=h^2(\varphi_*\mathcal{O}_X)=h^2(\mathcal{O}_{\mathbb{P}^2}\oplus E\oplus\mathcal{O}_{\mathbb{P}^2}(-4))
=h^0(E(1))+3=3.$$ $Z$ is the fixed part of $|K_X|$. It implies that
$$K_X^2\geqslant (\varphi^*H)^2=36=9(p_g(X)+1)\geqslant 9\chi(\mathcal{O}_X),$$
by Miyaoka-Yau inequality, $K_X^2=36$, $Z=0$ and $q(X)=0$. So
$\varphi_{K_X}=\varphi$.
\end{proof}

\begin{lemma}\label{decom} If $\varphi=\varphi_{K_X}$
is a finite abelian cover of degree $d$ over $\mathbb{P}^2$, then
$\varphi_*\mathcal{O}_X=\mathcal{O}_{\mathbb{P}^2}\oplus\mathcal{O}_{\mathbb{P}^2}(-2)^{\oplus
d-2} \oplus\mathcal{O}_{\mathbb{P}^2}(-4)$.
\end{lemma}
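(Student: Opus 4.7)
The plan is to mimic the bundle computation in the preceding lemma: first apply Theorem~\ref{F} to decompose $\varphi_*\mathcal{O}_X$ into line bundles on $\mathbb{P}^2$, then use relative duality together with $\omega_X=\varphi^*\mathcal{O}(1)$ to force a symmetry on the twists, and finally pin those twists down with a $p_g$ count.

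Concretely, since $\mathrm{Pic}(\mathbb{P}^2)=\mathbb{Z}$, Theorem~\ref{F} writes
\[
\varphi_*\mathcal{O}_X=\bigoplus_{g\in G}\mathcal{O}_{\mathbb{P}^2}(-m_g)
\]
for integers $m_g$, and $h^0(\varphi_*\mathcal{O}_X)=h^0(\mathcal{O}_X)=1$ forces $m_g\ge 0$ for every $g$, with $m_g=0$ occurring only at the identity. I would then repeat the relative duality comparison used in the previous lemma: on one side $\varphi_*\omega_X=(\varphi_*\mathcal{O}_X)^{\vee}\otimes\omega_{\mathbb{P}^2}=\bigoplus_g\mathcal{O}(m_g-3)$, and on the other $\varphi_*\omega_X=\mathcal{O}(1)\otimes\varphi_*\mathcal{O}_X=\bigoplus_g\mathcal{O}(1-m_g)$. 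Matching summands shows the multiset $\{m_g\}_{g\in G}$ is invariant under the involution $m\mapsto 4-m$; in particular $0\le m_g\le 4$ for every $g$.

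For the endgame, taking global sections gives $p_g(X)=3$ (because the canonical image is $\mathbb{P}^2$), while
\[
p_g(X)=\sum_{g\in G}h^0\bigl(\mathcal{O}(1-m_g)\bigr)=3+\#\{g:m_g=1\},
\]
which kills the exponent $1$; by the $m\leftrightarrow 4-m$ symmetry the exponent $3$ is then killed as well. This confines every $m_g$ to $\{0,2,4\}$, and since the identity is the unique element with $m_g=0$, its symmetry partner is the unique element with $m_g=4$, and the remaining $d-2$ group elements must all satisfy $m_g=2$. This yields exactly the claimed decomposition.

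The one delicate point I anticipate is justifying $\omega_X=\varphi^*\mathcal{O}(1)$, since a priori $|K_X|$ could carry a fixed part $Z$ with $K_X=\varphi^*\mathcal{O}(1)+Z$. I would handle this along the lines of the previous lemma, extracting a nonzero global section of $\omega_X\otimes\varphi^*\mathcal{O}(-1)$ from the candidate decomposition and then using the Miyaoka--Yau inequality together with $p_g(X)=3$ to force $Z=0$; everything else is bookkeeping on the character decomposition supplied by Theorem~\ref{F}.
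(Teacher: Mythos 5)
Your argument is correct, and the decisive step is genuinely different from the paper's. The paper stays entirely on the ``global sections'' side: writing $\varphi_*\mathcal{O}_X=\mathcal{O}\oplus\bigoplus\mathcal{O}(-l_i)$, it uses $p_g=3$ (via $h^0$ of $\varphi^*\mathcal{O}(1)$ and $h^2$ of $\varphi_*\mathcal{O}_X$) to force $2\le l_i\le 4$ with $\sum h^0(\mathcal{O}(l_i-3))=3$, which leaves two candidate multisets --- one $l_i=4$ and the rest $2$, or three $l_i=3$ and the rest $2$ --- and then eliminates the second by computing the bigenus $p_2(X)=K_X^2+\chi(\mathcal{O}_X)=d+4$ and comparing with $\sum h^0(\mathcal{O}(2-l_i))$. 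You instead import the relative-duality symmetry from the degree-$36$ lemma, so the multiset $\{m_g\}$ is invariant under $m\mapsto 4-m$; this makes the three-$3$'s configuration impossible a priori (it is not symmetric), and a single $p_g$ count finishes. Your route avoids the plurigenus formula and the case split, at the cost of invoking duality for the finite cover; the paper's route needs only Theorem~\ref{F}, Serre duality on $\mathbb{P}^2$, and minimality of $X$ (for $p_2=K^2+\chi$). Both are sound.

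One shared caveat: both proofs rest on $K_X=\varphi^*\mathcal{O}_{\mathbb{P}^2}(1)$, i.e.\ on $|K_X|$ having no fixed part. The paper asserts this without comment, so your flagging it is fair; but note that your proposed repair via Miyaoka--Yau is specific to degree $36$ (there $d=36=9(p_g+1)$ forces equality in $K_X^2\le 9\chi$, whereas for general $d$ the inequality $K_X^2\ge d$ gives no contradiction), so if you wanted to close that point you would need a different argument. Since the lemma's hypothesis is read in the paper as including this identification, this does not constitute a gap relative to the paper's own proof.
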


\begin{proof}
Because $\varphi$ is an abelian cover, $\varphi_*\mathcal{O}_X$ is a
direct sum of the line bundle.
$$\varphi_*\mathcal{O}_X=\mathcal{O}_{\mathbb{P}^2}\oplus
\bigoplus_{i=1}^{d-1}\mathcal{O}_{\mathbb{P}^2}(-l_i).$$

 Assume
$0<l_1\leqslant l_2\leqslant \cdots \leqslant l_{d-1}$. Since
$K_X=\varphi^*(\mathcal{O}_{\mathbb{P}^2}(1))$, for any $m\geqslant
1$,
$$p_m(X)=h^0(mK_X)=h^0(\varphi^*(\mathcal{O}_{\mathbb{P}^2}(m)))
=h^0(\mathcal{O}_{\mathbb{P}^2}(m))+
\sum_{i=1}^{d-1}h^0(\mathcal{O}_{\mathbb{P}^2}(m-l_i)).$$

Because
$p_g(X)=p_1(X)=h^0(\varphi^*(\mathcal{O}_{\mathbb{P}^2}(1)))=h^0(\mathcal{O}_{\mathbb{P}^2}(1))=3$,
we see that
$$h^0(\mathcal{O}_{\mathbb{P}^2}(1-l_i))=0, \forall i $$

So $l_i\geqslant 2$.

And $h^2(\varphi_*\mathcal{O}_X)=h^2(\mathcal{O}_{\mathbb{P}^2})
+\sum_{i=1}^{d-1}h^2(\mathcal{O}_{\mathbb{P}^2}(-l_i))$,

So $3=\sum_{i=1}^{d-1}h^0(\mathcal{O}_{\mathbb{P}^2}(l_i-3))$, then
$l_i \leqslant 4$.

Then there are two cases:

\begin{itemize}
\item[(1)]$l_1=\cdots=l_{d-2}=2,\quad l_{d-1}=4$, and
\item[(2)]$l_1=\cdots=l_{d-4}=2,\quad l_{d-3}=l_{d-2}=l_{d-1}=3$.
\end{itemize}
On the other hand, if $m=2$, we have the second plurigenus
$p_2(X)=K_X^2+\chi(\mathcal{O}_X)=d+4$. So
$$d+4=h^0(\mathcal{O}_{\mathbb{P}^2}(2))+
\sum_{i=1}^{d-1}h^0\big(\mathcal{O}_{\mathbb{P}^2}(2-l_i)\big).$$

The second case does not satisfy the equation. So the lemma is
proved.
\end{proof}

Now suppose $\varphi: X\rightarrow \mathbb{P}^2$ be an abelian cover
associated to an abelian group $G\cong\mathbb
Z_{n_1}\oplus\cdots\oplus\mathbb Z_{n_k}$. Then $X$ is the
normalization of the surface defined by
\[z_1^{n_1}=f_1=\prod_\alpha p_\alpha^{\alpha_1}, \cdots, z_k^{n_k}=f_k=\prod_\alpha p_\alpha^{\alpha_k},\]
where $p_{\alpha}$'s are prime factors and $\alpha=(\alpha_1,\cdots,
\alpha_k)\in G$. Denote $x_\alpha$ to be the degree of $p_\alpha$,
$e_i=(0,\cdots,0,1,0,\cdots,0)\in G$, $1\leq i \leq k$, and $l_g$ be
the degree of $L_g$. So $x_\alpha$ and $l_g$ are all integers. Then
\begin{eqnarray}\label{s}
&n_il_{e_i}=\sum\limits_\alpha \alpha_ix_\alpha\quad i=1,\cdots k,\\
& l_g=\sum\limits_{i=1}^k g_i
l_{e_i}-\sum\limits_\alpha\left[\sum\limits_{i=1}^k
\frac{\displaystyle{g_i\alpha_i}}{\displaystyle
 {n_i}}\right]x_\alpha.
 \end{eqnarray}

 \begin{lemma}{\label{jfc}}
Using the notation as above, if $\varphi=\varphi_{K_X}$, then there
exists $g'=(g'_1,\cdots, g'_k)\in G\cong\mathbb
Z_{n_1}\oplus\cdots\oplus\mathbb Z_{n_k}$, such that $x_{\alpha}$
satisfies the following equations
$${\LARGE{(*)}}\quad\quad\left\{\begin{array}{l}
n_il_{e_i}=\sum\limits_\alpha \alpha_ix_\alpha\\
l_{g'}=\sum\limits_{i=1}^k g'_i
l_{e_i}-\sum\limits_\alpha\left[\sum\limits_{i=1}^k
\frac{\displaystyle{g'_i\alpha_i}}{\displaystyle
 {n_i}}\right]x_\alpha=4 \hspace{1cm} \\
l_g=\sum\limits_{i=1}^k g_i
l_{e_i}-\sum\limits_\alpha\left[\sum\limits_{i=1}^k
\frac{\displaystyle{g_i\alpha_i}}{\displaystyle
 {n_i}}\right]x_\alpha=2, \quad \quad g\neq g', g\in G
\end{array}\right.$$
\end{lemma}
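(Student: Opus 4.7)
The plan is to combine Lemma \ref{decom} with the decomposition formula from Theorem \ref{F} and compare degrees on $\mathbb{P}^2$. By Lemma \ref{decom}, the hypothesis $\varphi = \varphi_{K_X}$ forces
$$\varphi_*\mathcal{O}_X \;\cong\; \mathcal{O}_{\mathbb{P}^2} \oplus \mathcal{O}_{\mathbb{P}^2}(-2)^{\oplus d-2} \oplus \mathcal{O}_{\mathbb{P}^2}(-4),$$
while Theorem \ref{F} always gives
$$\varphi_*\mathcal{O}_X \;\cong\; \bigoplus_{g \in G} \mathcal{O}_{\mathbb{P}^2}(-L_g).$$
Since $\mathrm{Pic}(\mathbb{P}^2) = \mathbb{Z}$, any two splittings of a bundle into line bundles agree as multisets of degrees. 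Thus the multiset $\{\,l_g : g \in G\,\}$ must equal $\{0,\underbrace{2,\ldots,2}_{d-2},4\}$.

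From the formula in Theorem \ref{F}, the identity element $g=0$ gives $L_0 = 0$, hence $l_0 = 0$. This absorbs the unique $0$ in the multiset above. So exactly one non-identity element $g' \in G$ satisfies $l_{g'} = 4$, and every other $g \in G \setminus \{0, g'\}$ satisfies $l_g = 2$. This gives the existence of $g'$ asserted in the lemma.

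It remains to write down the three equations of $(*)$. The first is the degree of the linear equivalence $D_i \sim n_i L_i$ on $\mathbb{P}^2$: from $D_i = \mathrm{div}(f_i) = \sum_\alpha \alpha_i\, \mathrm{div}(p_\alpha)$ with $\deg p_\alpha = x_\alpha$ and $\deg L_i = l_{e_i}$, taking degrees yields $n_i l_{e_i} = \sum_\alpha \alpha_i x_\alpha$, which is precisely equation \eqref{s}. The remaining two equations are obtained by specializing the general formula
$$l_g \;=\; \sum_{i=1}^k g_i\, l_{e_i} - \sum_\alpha \left[\sum_{i=1}^k \frac{g_i \alpha_i}{n_i}\right] x_\alpha$$
from Theorem \ref{F} to $g = g'$ and to arbitrary $g \in G \setminus \{0, g'\}$, and substituting the degree values $4$ and $2$ respectively established in the previous paragraph.

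There is no real obstacle here beyond careful bookkeeping: everything rests on matching the two decompositions of $\varphi_*\mathcal{O}_X$ as sums of line bundles on $\mathbb{P}^2$. The only subtlety worth highlighting is that the element $g'$ is not canonically specified by the data $(n_i, D_i)$; the lemma merely asserts that \emph{some} such $g'$ exists, and in the subsequent classification work it will be this combinatorial system $(*)$ on the integers $x_\alpha$ that must be solved for each group $G$ and each candidate $g'$.
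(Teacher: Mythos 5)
Your proof is correct and follows exactly the route the paper intends: the paper's entire proof of this lemma is the single sentence ``It comes from Lemma \ref{decom} directly,'' and your argument is precisely the expected unpacking of that citation (matching the multiset of degrees from Lemma \ref{decom} against the $L_g$-decomposition of Theorem \ref{F}, peeling off $l_0=0$ for the identity, and reading off the equations from \eqref{s}). The only cosmetic point is that uniqueness of the splitting into line bundles is better justified by the Krull--Schmidt property of decompositions on $\mathbb{P}^2$ than by ``$\mathrm{Pic}(\mathbb{P}^2)=\mathbb{Z}$'' alone, but the conclusion is standard and correct.
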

\begin{proof} It comes from Lemma \ref{decom} directly.

\end{proof}
By the above lemma, finding surfaces whose canonical map are abelian
covers over $\mathbb{P}^2$ is equivalent to finding the integral
roots $\{x_\alpha\}$ of the above equations.

\begin{theorem}\label{36}If the canonical degree of $X$ is $36$, then $\varphi_{K_X}$ can not be an abelian cover.
\end{theorem}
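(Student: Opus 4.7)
The plan is to combine Lemmas \ref{decom} and \ref{jfc} with a ramification identity to reduce the statement to a finite check over the four abelian groups of order $36$. Since $\deg\varphi_{K_X}=36$ forces, by Beauville's theorem reviewed in Section \ref{secintro}, the invariants $K_X^2=36$, $p_g=3$, $q=0$ and $\Sigma=\mathbb{P}^2$, Lemma \ref{decom} yields
\[
\varphi_*\mathcal{O}_X=\mathcal{O}_{\mathbb{P}^2}\oplus\mathcal{O}_{\mathbb{P}^2}(-2)^{\oplus 34}\oplus\mathcal{O}_{\mathbb{P}^2}(-4).
\]
By Lemma \ref{jfc}, realizing $\varphi_{K_X}$ as an abelian cover with group $G$ of order $36$ then amounts to producing a non-negative integer solution of the system $(*)$ with $l_{g'}=4$ for a unique $g'\in G\setminus\{0\}$ and $l_g=2$ for every other $g\neq 0$.

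My next step would be to extract a sharp numerical constraint. Applying Theorem \ref{F} to $g$ and to $-g$ and adding, the linear parts cancel against $\sum_i n_iL_i\sim\sum_iD_i$ and the floors combine to the identity
\[
l_g+l_{-g}=\sum_{\alpha\,:\,\langle g,\alpha\rangle\neq 0}x_\alpha,\qquad \langle g,\alpha\rangle:=\sum_i g_i\alpha_i/n_i\bmod 1.
\]
Summing over all $g\neq 0$ and interchanging the order of summation yields the ramification identity
\[
\sum_\alpha x_\alpha\Bigl(1-\tfrac{1}{n_\alpha}\Bigr)=\tfrac{2}{|G|}\sum_{g\neq 0}l_g=4,
\]
where $n_\alpha$ denotes the order of $\alpha\in G$. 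Since $n_\alpha\geq 2$, this forces the total branch degree to satisfy $\sum_\alpha x_\alpha\leq 8$. Moreover, the branch indices $\{\alpha:x_\alpha>0\}$ must generate $G$: if they lay in a proper subgroup $H\subsetneq G$, then any $g$ in the non-trivial annihilator $H^\perp$ would give $l_g=\sum_i g_il_{e_i}-\sum_\alpha x_\alpha\langle g,\alpha\rangle=0$, contradicting $l_g=2$.

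The abelian groups of order $36$ are $\mathbb{Z}_{36}$, $\mathbb{Z}_2\oplus\mathbb{Z}_{18}$, $\mathbb{Z}_3\oplus\mathbb{Z}_{12}$ and $\mathbb{Z}_6\oplus\mathbb{Z}_6$. For each of these, the ramification identity, the bound $\sum x_\alpha\leq 8$, the generation condition, and the integrality constraints coming from $n_il_{e_i}=\sum_\alpha\alpha_ix_\alpha$ cut out only finitely many candidate branch configurations $\{x_\alpha\}$. For each candidate one computes the vector $(l_g)_{g\in G}$ via the formula in Lemma \ref{jfc} and checks whether its multiset equals $\{0,2^{34},4\}$.

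The main obstacle will be organizing this verification compactly across all four groups. A useful shortcut is that $l_g+l_{-g}=4$ for every $g\neq 0,\pm g'$, while $l_{g'}+l_{-g'}$ equals $6$ or $8$ depending on whether $g'$ has order greater than $2$ or exactly $2$; this localises the allowed defect to the orbit $\{g',-g'\}$. In each of the four groups the combination of the tight range on $\sum x_\alpha$, the integrality constraints on $l_{e_i}$, and the generation requirement leaves no branch configuration compatible with simultaneously achieving $l_{g'}=4$ and $l_g=2$ for all other $g\neq 0$. The resulting contradiction, reached group-by-group after a short case check, completes the proof.
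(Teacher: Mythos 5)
Your proposal follows essentially the same route as the paper: both reduce the statement, via Lemmas \ref{decom} and \ref{jfc}, to the non-existence of non-negative integral solutions of the system $(*)$ for the abelian groups of order $36$, and both ultimately \emph{assert} rather than exhibit the decisive finite verification (the paper says ``after computation,'' you say ``a short case check''). Your added ramification identity $l_g+l_{-g}=\sum_{\alpha:\langle g,\alpha\rangle\neq 0}x_\alpha$, hence $\sum_\alpha x_\alpha(1-1/n_\alpha)=4$ and $\sum_\alpha x_\alpha\leq 8$, is a genuine and correct refinement that the paper lacks and that makes the search tractable; the only slip is in your generation argument, where for $g\in H^\perp$ the correct deduction is $l_g+l_{-g}=0$ (or $l_g=\sum_\alpha\{\sum_i g_i\alpha_i/n_i\}x_\alpha=0$), not the floor-free expression you display, though the contradiction with $l_g=2$ is unaffected.
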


\begin{proof} From the famous theorem of Beauville(\cite{Bea}), we know the image of $\varphi_{K_X}(X)$ is $\mathbb{P}^2$. We assume that $\varphi_{K_X}:X\rightarrow
\mathbb{P}^2$ is an abelian cover associated to abelian group $G$.
Then $|G|=36$.

If $G=\mathbb{Z}_{36}$, then there exists some $g'\in
\mathbb{Z}_{36}$ and corresponding $l$ such  that $\{x_\alpha\}$ must be satisfied the following
equations:
$$\left\{\begin{array}{l}
l_{g'}=g'l_1-\sum_{\alpha=1}^{35}[\frac{g'\alpha}{36}]x_\alpha=4,\\
 l_g=gl_1-\sum_{\alpha=1}^{35}[\frac{g\alpha}{36}]x_\alpha=2, \quad\quad g\neq g', \quad 1\leq
 g\leq35\\
36l_1=\sum_{\alpha=1}^{35}\alpha x_\alpha.\\ \end{array}\right.$$
After computation, we find that there is no non-negative integral
solution for the above equations for any $1\leq
 g'\leq35$.

Using the same method, we find there is no non-negative integral
solution for $(*)$ as $G=\mathbb{Z}_{n_1}\oplus\mathbb{Z}_{n_2}$ and
$n_1|n_2$.

So $\varphi_{K_X}$ can not be abelian.

\end{proof}

\begin{theorem}
Assume $\varphi: X\longrightarrow\mathbb{P}^2$ is an abelian cover.
\begin{enumerate}
\item[(1)] Let $\deg\varphi=16$, then $\varphi=\varphi_{K_X}$ if and only if $X$
is defined by
\begin{equation}
z_1^2=\ell_1\ell_3\ell_4\ell_7,\hskip0.3cm
z_2^2=\ell_1\ell_2\ell_4\ell_5,\hskip0.3cm
z_3^2=\ell_1\ell_2\ell_3\ell_6,\hskip0.3cm
z_4^2=\ell_2\ell_5\ell_6\ell_8.\label{8}
\end{equation}
In particular, the surface defined by the first three equations is a
Campedelli surface.
\item[(2)]Let $\deg\varphi=9$, then $\varphi=\varphi_{K_X}$ if and only if
$X$ is defined by $z_1^3=a_1a_2^2,z_2^3=a_1a_2a_3$.
\item[(3)] Let $\deg\varphi=8$, then $\varphi=\varphi_{K_X}$ if and only if
$X$ is defined by either $z_1^2=a_1a_4,z_2^2=a_2a_4,z_3^2=a_3a_4$,
or $z_1^2=a_1a_2,z_2^4=a_2^3a_3$.
\item[(4)] Let $\deg\varphi=6$, then
$\varphi=\varphi_{K_X}$ if and only if $X$ is defined by
$z_1^2=a_1a_2,z_2^3=a_2a_3^2$.
\item[(5)] Let $\deg\varphi=4$, then
$\varphi=\varphi_{K_X}$ if and only if
 $X$ is defined by either
 $z^4=a_1^2b_2^3$,or $z_1^2=b_1$,$z_2^2=b_2$.
\item[(6)] Let $\deg\varphi=3$, then $\varphi=\varphi_{K_X}$ if and only if
 $X$ is defined by either
 $z^3=c_1^2$,or $z^3=c_2$.
\item[(7)] Let $\deg\varphi=2$, then $\varphi=\varphi_{K_X}$ if and only if
 $X$ is
 $z^2=h$.
\end{enumerate}
Here $\ell_i$'s define different lines and there are at most three
lines among them passing through one point, for $i=1\cdots 8$;
$a_i$'s are reduced of degree $2$; $b_i$'s are reduced of degree
$4$; $c_1, c_2$ are reduced of degree $6$; $h$ is reduced of degree
$8$ and all the irreducible components of $a_i$'s , $b_i$'s, $c_1,
c_2$ and $h$ are simply normal crossing.
\end{theorem}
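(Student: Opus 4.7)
The plan is to proceed case by case on the canonical degree $d \in \{2,3,4,6,8,9,16\}$. For each such $d$, I would enumerate the abelian groups $G$ of order $d$ in invariant-factor form $\mathbb{Z}_{n_1} \oplus \cdots \oplus \mathbb{Z}_{n_k}$ with $n_1 \mid n_2 \mid \cdots \mid n_k$, and apply Lemma \ref{jfc} to translate the condition $\varphi = \varphi_{K_X}$ into the linear system $(*)$ in the unknowns $\{x_\alpha\}_{\alpha \in G \setminus \{0\}}$ and $\{l_{e_i}\}_{1\le i \le k}$, with one distinguished element $g' \in G$ playing the role of the $\mathcal{O}_{\mathbb{P}^2}(-4)$ summand of the decomposition in Lemma \ref{decom}.

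For the \emph{only if} direction, I would solve $(*)$ over the non-negative integers for each pair $(G, g')$. The $l_{e_i}$ are obtained by setting $g = e_i$, and the remaining $|G| - 1$ equations have coefficients $0$ or $1$ coming from the integer-part functions $\left[\sum g_i \alpha_i/n_i\right]$, so this is a finite combinatorial problem amenable to direct case analysis. For each valid solution, the data $(\alpha, x_\alpha)$ encode a divisor $D_i = \sum_\alpha \alpha_i\, p_\alpha$ with $\deg p_\alpha = x_\alpha$, and hence the defining equations $z_i^{n_i} = \prod_\alpha p_\alpha^{\alpha_i}$ appearing in (1)--(7). The reducedness and degree conditions on $\ell_i, a_i, b_i, c_1, c_2, h$ are extracted from these exponents, while the simple normal crossing hypothesis is imposed so that the normalization of the cover is smooth, i.e., so that $X$ really is the surface being described.

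For the \emph{if} direction, given the defining equations in each case, I would invoke Theorem \ref{F} to write $\varphi_*\mathcal{O}_X = \bigoplus_{g \in G} \mathcal{O}_{\mathbb{P}^2}(-L_g)$ and verify by direct inspection that the degrees $l_g$ are exactly those prescribed by Lemma \ref{decom}. By relative duality and the projection formula,
$$\varphi_*\omega_X \cong \mathcal{O}_{\mathbb{P}^2}(1) \oplus \mathcal{O}_{\mathbb{P}^2}(-1)^{\oplus d-2} \oplus \mathcal{O}_{\mathbb{P}^2}(-3),$$
so that $p_g(X) = 3$, $q(X) = 0$, and $h^0(\omega_X \otimes \varphi^*\mathcal{O}_{\mathbb{P}^2}(-1)) = 1$. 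The last equality yields a unique effective divisor $Z$ with $K_X \sim \varphi^*H + Z$; since $h^0(K_X) = h^0(\varphi^*H) = 3$, the divisor $Z$ is the fixed part of $|K_X|$, the mobile part coincides with $\varphi^*|H|$, and therefore $\varphi_{K_X} = \varphi$ as maps to $\mathbb{P}^2$.

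The main obstacle is the size of the enumeration for $d = 16$, where $G$ can be any of $\mathbb{Z}_{16}$, $\mathbb{Z}_2 \oplus \mathbb{Z}_8$, $\mathbb{Z}_4 \oplus \mathbb{Z}_4$, $\mathbb{Z}_2 \oplus \mathbb{Z}_2 \oplus \mathbb{Z}_4$, or $\mathbb{Z}_2^{\oplus 4}$, and the system $(*)$ has up to fifteen unknowns. Ruling out all solutions other than those producing the stated $\mathbb{Z}_2^{\oplus 4}$-cover requires a mechanical but delicate case split, and the incidence condition ``at most three lines through a point'' must be extracted as the precise translation of the simple normal crossing hypothesis for the configuration of the eight lines $\ell_1,\ldots,\ell_8$; this is also what ensures that the subcover defined by the first three equations gives a Campedelli surface, completing the final assertion of part (1).
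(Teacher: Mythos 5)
Your proposal follows essentially the same route as the paper: both reduce the problem to Lemma \ref{jfc} and the non-negative integral solutions of the system $(*)$ for each abelian group of the given order, and both then translate a solution $\{x_\alpha\}$ into the branch data and defining equations. The one genuinely different sub-argument is your verification of the ``if'' direction: you use relative duality to get $\varphi_*\omega_X\cong\mathcal{O}_{\mathbb{P}^2}(1)\oplus\mathcal{O}_{\mathbb{P}^2}(-1)^{\oplus d-2}\oplus\mathcal{O}_{\mathbb{P}^2}(-3)$ and conclude that $K_X\sim\varphi^*H+Z$ with $Z$ the fixed part, whereas the paper computes $K_X=\varphi^*\bigl(K_{\mathbb{P}^2}+\tfrac12\sum_iH_i\bigr)=\varphi^*H$ directly from the ramification indices of Theorem \ref{branch}, after blowing up the multiple points of the line configuration. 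Both arguments are valid, but the paper's explicit ramification computation is what actually pins down the incidence condition: configurations with exactly three concurrent lines still yield $K_\Sigma=\pi^*\sigma^*H$ (so $X$ acquires only $A_1$ singularities and the degree stays $16$), while four or more concurrent lines force $K_\Sigma^2<16$ with non-RDP singularities and must be excluded. You do flag this case split, but your framing of ``at most three lines through a point'' as the translation of simple normal crossing is not quite right --- triple points are permitted and do \emph{not} give a smooth cover; the paper's blow-up analysis is still needed there. You also leave the Campedelli assertion (computing $p_g=q=0$, $K^2=2$ for the subcover via Theorem \ref{F} and Corollary \ref{h}) essentially unargued, though the intended method is clear.
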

 \begin{proof}
 If $\varphi=\varphi_{K_X}$, then $K_X=\varphi^*(\mathcal{O}_{\mathbb{P}^2}(1))$, $|K_X|$ has no fixed part
 and is base point free.

Let $G=\mathbb{Z}_{n_1}\oplus\cdots\oplus\mathbb{Z}_{n_k}$. Since
the arguments are similar, we only prove the most complicated case
when $|G|=16$. By Lemma \ref{jfc}, we only need to find the integral
solution of the equations (*). After computation, we find if and
only if
$G=\mathbb{Z}_2\oplus\mathbb{Z}_2\oplus\mathbb{Z}_2\oplus\mathbb{Z}_2$,
there are integral solutions.
\begin{enumerate}
\item[(1)]$l_{(1,0,0,0)}=4$,
$x_{(1,1,0,1)}=x_{(1,1,1,0)}=x_{(1,0,1,1)}=x_{(1,0,1,0)}=x_{(1,0,0,1)}=x_{(1,0,0,0)}=x_{(1,1,1,1)}=x_{(1,1,0,0)}=1$,
$x_{(0,1,0,0)}=x_{(0,0,0,1)}=x_{(0,0,1,0)}=x_{(0,0,1,1)}=x_{(0,1,1,0)}=x_{(0,1,0,1)}=x_{(0,1,1,1)}=0$,
\item[(2)]$l_{(1,1,0,0)}=4$,
$x_{(0,1,1,1)}=x_{(1,0,1,1)}=x_{(0,1,1,0)}=x_{(1,0,1,0)}=x_{(1,0,0,1)}=x_{(0,1,0,0)}=x_{(1,0,0,0)}=x_{(0,1,0,1)}=1$,
$x_{(0,0,1,1)}=x_{(1,1,0,0)}=x_{(1,1,1,1)}=x_{(1,1,0,1)}=x_{(1,1,1,0)}=x_{(0,0,0,1)}=x_{(0,0,1,0)}=0$,
\item[(3)]$l_{(1,1,1,0)}=4$,
$x_{(1,1,1,0)}=x_{(0,1,0,1)}=x_{(1,0,0,1)}=x_{(0,1,0,0)}=x_{(1,0,0,0)}=x_{(1,1,1,1)}=x_{(0,0,1,0)}=x_{(0,0,1,1)}=1$,
$x_{(0,1,0,1)}=x_{(1,1,0,0)}=x_{(0,1,1,0)}=x_{(0,1,1,1)}=x_{(1,1,0,1)}=x_{(1,0,1,1)}=x_{(0,0,0,1)}=0$,
\item[(4)]$l_{(1,1,1,1)}=4$,
$x_{(0,1,1,1)}=x_{(1,1,0,1)}=x_{(1,1,1,0)}=x_{(1,0,1,1)}=x_{(0,1,0,0)}=x_{(1,0,0,0)}=x_{(0,0,1,0)}=x_{(0,0,0,1)}=1$,
$x_{(0,0,1,1)}=x_{(1,0,1,0)}=x_{(1,1,0,0)}=x_{(1,0,0,1)}=x_{(0,1,1,0)}=x_{(0,1,0,1)}=x_{(1,1,1,1)}=0$.

\end{enumerate}

 Although there are 4 sets of
solution, these defining equations are equivalent after quotient the
group $sl(4,
 \mathbb{Z}_2)$ which means that they are same surfaces. So the surface
 $X$
 is defined by \eqref{8}.
Next we are going to discuss the configurations of $\ell_i$'s as
following three cases.
\begin{itemize}
\item  If $\ell_i$'s are in general position, i.e. they are simply
normal crossing. Now we prove $X$ is smooth.

Let $p_{ij}$ be the intersection point of $\ell_i$ and $\ell_j$. The
cover is locally defined by \[z_1^2=x^{a_{11}}y^{a_{12}}, \quad
z_2^2=x^{a_{21}}y^{a_{22}}, \quad z_3^2=x^{a_{31}}y^{a_{32}},\quad
z_4^2=x^{a_{41}}y^{a_{42}},\] where $a_{ij}=0$ or $1$ for all $i$,
$j$.

It is easy to check that $\{(a_{i1}, a_{i2})\}\nsubseteqq
\{(1,1),(0,0)\}$ i.e., at least one pair $\{(a_{i1},
a_{i2})\}=\{(1,0)\}$ or $\{(0, 1)\}.$  Without lose of generality,
we can assume $(a_{11},a_{21})=(1,0)$, i.e. $z_1^2=x$. The cover is
branched along the smooth line. So the surface is smooth.

From Theorem \ref{branch}, the ramification index of
$H_i=\Div(\ell_i)$
 is $2$.
Thus
$K_X=\varphi^*(\mathcal{O}_{\mathbb{P}^2}(-3)+\frac{1}{2}\sum_{i=1}^{8}H_i)
=\varphi^*(\mathcal{O}_{\mathbb{P}^2}(1))$, which means
$\varphi=\varphi_{K_X}$.
\vspace{.5cm}
\item  If there are three of $\ell_i$'s intersecting at a point, we blow up
these triple points at first. Let $\sigma: P\rightarrow
\mathbb{P}^2$ be the blowing-ups of these triple points with
$\left\{E_s\right\}$ the except curves and $\pi: \Sigma\rightarrow
P$ be the corresponding abelian cover, i.e., the pull back of
$\varphi$ by $\sigma$. Then it is easy to check that $E_s$'s are in
the branch locus of $\pi$ and the ramification indices are also $2$.
Similarly, we can show that $\Sigma$ is smooth. Then the canonical
divosor
\begin{eqnarray*}
K_\Sigma&=&\pi^*(\sigma^*(-3H)+\sum_{s}E_s+\frac{1}{2}(\sigma^*(8H)-3\sum_{s}E_s)+\frac{1}{2}\sum_{s}E_s)\\
&=&\pi^*\sigma^*(H),
\end{eqnarray*}
where $H$ is a hyperplane on $\mathbb{P}^2$. So the surface is
minimal and the degree of the canonical map is $16$. In this case,
$X$ has only $A_1$ type singularities. \vspace{.5cm}
\item  If there are $d$ lines among $\ell_i$'s passing through a point,
$d\geq 4$. We blow up these points such that the branch locus are
normal crossing. Let $\sigma: P\rightarrow \mathbb{P}^2$ be the
blowing-ups of these points with $\left\{E_s| s \in S\right\}$ the
except curves and $\pi: \Sigma\rightarrow P$ be the corresponding
abelian cover. Then $\Sigma$ is smooth. $\left\{E_s|s \in S\right\}$
decomposes into $2$ parts: $\left\{E_s|s \in S_1\right\}$ are in the
branch locus of $\pi$ and $\left\{E_s| s \in S_2\right\}$ are not .
Then the canonical divisor
\begin{eqnarray*}
K_\Sigma
&=&\pi^*(\sigma^*(-3H)+\sum_{s\in S}E_s+\frac{1}{2}(\sigma^*(8H)-d\sum_{s\in S}E_s)+\frac{1}{2}\sum_{s\in S_1}E_s)\\
&=&\pi^*(\sigma^*(H)+\frac{3-d}{2}\sum_{s\in
S_1}E_s+\frac{2-d}{2}\sum_{s\in S_2}E_s),
\end{eqnarray*}
where $H$ is a hyperplane on $\mathbb{P}^2$. Obviously,
$K_\Sigma^2<16$. There is no $-1$ curves on $\Sigma$ by resolution
and the singularities on $X$ are not rational double points.
\end{itemize}

Therefore the configuration of $\ell_i$ has at most triple points.

Let $X_1$ be the surface defined by the first three equations of
\eqref{8} and suppose $\pi': X_1\rightarrow \mathbb{P}^2$. When
$\ell_i$'s are in general position, we can show that $X_1$ is smooth
and $K_{X_1}^2=2$. By Theorem \ref{F}, $L_g\sim 2H$ for $g \neq 0$,
where $H$ is a hyperplane on $\mathbb{P}^2$. So
$h^0(-L_g)=h^0(K_{\mathbb{P}^2}+L_g)=0$ for all nonzero $g \in
\mathbb{Z}_2\oplus \mathbb{Z}_2\oplus\mathbb{Z}_2$. Due to Corollary
\ref{h}, $p_g(X_1)=q(X_1)=0$. Therefore the surface $X_1$ is a
Campedelli surface.

Now we compute the fundamental group of the surface $X_1$.

According to the result of the canonical resolution of double cover,
the preimage of each $L_i=\Div(\ell_i)$ by $\pi'$ is irreducible.
Then assume $\pi'^*(L_i)=2A_i$, we can get $A_i^2=2$.

Due to $2A_i\sim \pi'^*(H)$, $2(A_i-A_j)\sim 0$. Assume $A_i\sim
A_j$, $i \neq j$,
 then $h^0(A_i)\leq
 2$ since $h^0(A_i+A_j)\geq h^0(A_i)+h^0(A_j)-1$.  And the equation holds if and only if $A_i$ and $A_j$
 do not intersect, which is contradict to $A_i^2=2$.  So
 $h^0(A_i)=1$, which means that $A_i$ is not linear equivalent to $A_j$.

Therefore each $A_i-A_j$ is a $2$-torsion element in $X_1$. So $X_1$
has at least $6$ $2$-torsion elements.

Denote $B=\sum_{i=1}^7L_i$ to be the branch locus in $\mathbb{P}^2$.
$\pi_1(\mathbb{P}^2-B)=\mathbb{Z}^{\bigoplus 6}$ is an abelian
group. We know that $X_1-\pi'^*(B)$ is an $\acute{\text{e}}$tale
cover over $\mathbb{P}^2-B$. So $\pi_1(X_1-\pi'^*(B))$ is a subgroup
of $\pi_1(\mathbb{P}^2-B)$ which is abelian and $\pi_1(X_1)$ is the
quotient group of $\pi_1(X_1-\pi'^*(B))$, then $\pi_1(X_1)$ is also
an abelian group, which means $\pi_1(X_1)=H_1(X_1,\mathbb{Z})$.

As $q=0$, $H^1(X_1,\mathbb{C})=0$. Using the universal coefficient
theorem for cohomology, we know
$H^1(X_1,\mathbb{C})=H_1(X_1,\mathbb{C})=0$. Since
$H_1(X_1,\mathbb{C})=H^1(X_1,\mathbb{Z})\bigotimes \mathbb{C}=0$,
$H^1(X_1,\mathbb{Z})$ only has torsion parts.

Due to the following exact sequence
$$H^1(X_1, \mathcal{O}_{X_1})\longrightarrow H^1(X_1, \mathcal{O}^*_{X_1})\longrightarrow H^2(X_1,\mathbb{Z})\longrightarrow H^2(X_1, \mathcal{O}_{X_1})$$
and $p_g(X_1)=q(X_1)=0$, we know $\mathrm{Pic} X_1=H^1(X_1,
\mathcal{O}^*_{X_1})= H^2(X_1,\mathbb{Z})$.

From the following sequence
$$0\longrightarrow \mathrm{Ext}(H_1(X_1,\mathbb{Z}),\mathbb{Z}) \longrightarrow  H^2(X_1, \mathbb{Z})
\longrightarrow \mathrm{Hom}(H_2(X_1, \mathbb{Z}), \mathbb{Z})
\longrightarrow 0,$$ we have
$H_1(X_1,\mathbb{Z})=\mathrm{Tor}(H^2(X_1,\mathbb{Z}))$.

For any Campedelli surface $X$, it is well known that
$\big|\pi_1(X)\big|\leqslant9$(\cite{Rei}). So the torsion group of
the Picard group of $X_1$ is
$G=\mathbb{Z}_2\oplus\mathbb{Z}_2\oplus\mathbb{Z}_2$, i.e.,
$\pi_1(X_1)=\mathbb{Z}_2\oplus\mathbb{Z}_2\oplus\mathbb{Z}_2$.

\end{proof}

\begin{remark}
\begin{enumerate}
\item [(1)] From the above theorem, we know the defining equations of
Persson's surface.
\item [(2)] The Campedelli surface constructed in the proof is as same as
that given by Kulikov (\cite{Kul}). In fact, Campedelli surfaces
with fundamental group of order $8$ are completely classified in
\cite{ML-P-R}.
\end{enumerate}
\end{remark}

\begin{corollary}
If $$\varphi=\varphi_{K_X}: X\longrightarrow\mathbb{P}^2,$$ is an
abelian cover, then $d$ is equal to $2,3,4,6,8,9$ or $16$.
\end{corollary}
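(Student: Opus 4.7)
The plan is to combine the known upper bound on $d$ with the linear-algebra criterion set up in Lemma \ref{jfc}. By Beauville's theorem, $d\leq 36$, and Theorem \ref{36} excludes $d=36$. The preceding theorem exhibits abelian covers realising the canonical map for each $d\in\{2,3,4,6,8,9,16\}$. Hence it suffices to show that for every
$$d\in\{5,7,10,11,12,13,14,15\}\cup\{17,18,\ldots,35\},$$
no abelian cover $\varphi:X\to\mathbb{P}^2$ of degree $d$ can satisfy $\varphi=\varphi_{K_X}$.

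For each such $d$, I would enumerate the abelian groups $G$ of order $d$ written in invariant-factor form $G\cong\mathbb{Z}_{n_1}\oplus\cdots\oplus\mathbb{Z}_{n_k}$ with $n_1\mid n_2\mid\cdots\mid n_k$, and for every choice of a distinguished element $g'\in G$ apply Lemma \ref{jfc}. The existence of a realising cover is then equivalent to the system $(*)$ in the non-negative integer unknowns $\{x_\alpha\}_{\alpha\in G}$ admitting a solution with $l_{g'}=4$, $l_g=2$ for $g\neq g'$, and $n_il_{e_i}=\sum_\alpha\alpha_ix_\alpha$. This is exactly the computation performed in the proof of Theorem \ref{36} for $|G|=36$, and case by case in the proof of the preceding theorem for the seven admissible values; the remaining $d$ are handled by the same mechanical procedure.

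The main obstacle is the combinatorial breadth of the case analysis: for each excluded $d$ one must rule out several group structures and, inside each, up to $|G|-1$ choices of the distinguished element $g'$. The argument for a given triple $(d,G,g')$ is, however, uniform. It rests on (i) the divisibility conditions $\sum_\alpha\alpha_ix_\alpha\equiv 0\pmod{n_i}$ forced by the constraints $n_il_{e_i}=\sum_\alpha\alpha_ix_\alpha$; (ii) the non-negativity of the $x_\alpha$; and (iii) the lower bound on $\sum_\alpha[\sum_i g_i\alpha_i/n_i]x_\alpha$ coming from requiring $l_g\geq 2$ for every $g\neq g'$, which clashes with the fixed totals $\sum_\alpha\alpha_ix_\alpha=n_il_{e_i}$. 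Carrying out this finite verification (by hand or by a short computer enumeration) across the remaining values of $d$ completes the proof.
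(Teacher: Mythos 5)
Your proposal is correct and is essentially the paper's own argument: the corollary rests on Beauville's bound $d\leq 36$, the exclusion of $d=36$ in Theorem \ref{36}, and the same finite, case-by-case check of non-negative integral solvability of the system $(*)$ from Lemma \ref{jfc} over all abelian groups of the remaining orders, which is exactly the computation the paper invokes (without displaying it) in the proofs of Theorem \ref{36} and the classification theorem. The only caveat is that solvability of $(*)$ is a necessary, not an equivalent, condition for $\varphi=\varphi_{K_X}$, but since the corollary only requires the exclusion direction this does not affect your argument.
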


\section{Defining equations of Tan's Examples}\label{Examples}
In \cite{Tan}, Tan constructed a series of nice surfaces whose
canonical maps are of odd degrees by using a so-called conception
$\mathbb{Z}_p$-set of the singularities.

Let $N$ be a set of singular points of surface $\Sigma$, and let
$\sigma: Y\rightarrow \Sigma$ be the minimal resolution of $\Sigma$
with $\Gamma_1$, $\cdots$, $\Gamma_n$ the components of
$\sigma^{-1}(N)$. If there are positive integers $a_i<p$ and a
divisor $D$ on $Y$ such that $\sum_{i=1}^na_i\Gamma\sim pD$ then $N$
is called a $\mathbb{Z}_p$-set of singularities on $\Sigma$. A $p$
fold cyclic cover can be determined by the $\mathbb{Z}_p$-set.

 Take $3$ points $q_1$, $q_2$, $q_3$ in general position on $\mathbb P^2$,
and every $3$ lines passing through each point. This configuration
of $9$ lines has $n+3$ triple points and $27-3n$ double points. And
he assumed that all lines passing through $q_1$, $q_2$ contain no
$4$ triple points. Blowing up the $n+3$ triple points, we suppose
each $A_i$ to be  the strict transforms of the $3$ lines passing
through $q_i$. Then a Galois triple cover $\pi_1$ with branch locus
$A_1+A_2+A_3$ can be defined and
$N=\pi_1^{-1}(\bigcup_{i<j}A_i\bigcap A_j)$ is a $\mathbb{Z}_3$-set.
His example $X$ is the 3 fold cyclic cover determined by $N$. In
order to show the canonical map of $X$ is of degree 3, Tan
calculated the invariants of such surface. But the computation of
the invariants of the surface is pretty complicated. The main part
of \cite{Tan} is the computation of $p_g(X)$.

We can reconstruct the Tan's example by explicitly equations. Let
$\varphi: X \rightarrow \mathbb{P}^2$ be an abelian cover branched
over the line configuration described by Tan. Assume $\ell_1$,
$\ell_2$, $\ell_3$ are the $3$ lines through $q_1$, $\ell_4$,
$\ell_5$, $\ell_6$ are the $3$ lines through $q_2$ and $\ell_7$,
$\ell_8$, $\ell_9$ are the $3$ lines through $q_3$. $\varphi$ is
defined by the following equations.
$$\pi:  \hskip0.3cm  z_1^3=\ell_1\ell_2\ell_3\ell_4\ell_5\ell_6\ell_7\ell_8\ell_9,\hskip0.3cm
z_2^3=\ell_1\ell_2\ell_3\ell_4^2\ell_5^2\ell_6^2.$$

Let $\sigma: P\rightarrow \mathbb{P}^2$ be blowing-ups of
$q_1$,$q_2$,$q_3$. $\pi: X\rightarrow P$ is the pull back of
$\varphi$ by $\sigma$. Let $\Sigma$ be the minimal resolution of
$\pi':Y \rightarrow \mathbb{P}^2$ which is a cyclic cover defined by
$z^3=\ell_1\ell_2\ell_3\ell_4\ell_5\ell_6\ell_7\ell_8\ell_9$. It is
clear that $X$ and $Y$ have at most $A_1$-type singularities. We
calculate invariants $q(\Sigma)= q(X)=0$ and
$p_g(X)=p_g(\Sigma)=8-n$ by Theorem \ref{F}. The canonical map of
$X$ factorizes through that of $\Sigma$, so it is of degree $3$.

In \cite{Tan}, the author also constructed a surface whose canonical
map is of degree 5 by using $\mathbb{Z}_5$ set. We can also
reconstruct it as follows.

Take five lines $\ell_1$, $\cdots$, $\ell_5$ in general position on
$\mathbb{P}^2$. $\pi: X\rightarrow \mathbb{P}^2$ is an abelian cover
defined by
$$z_1^5=\ell_1\ell_2\ell_3\ell_4\ell_5,\hskip 1cm z_2^5=\ell_1\ell_2^2\ell_3^3\ell_4^4. $$

The 5 fold covering surface $\Sigma$ over $\mathbb{P}^2$ is defined
by the first equation. Similarly, we get $K_X=25$, $K_{\Sigma}=5$,
$\chi(\mathcal{O}_X)=\chi(\mathcal{O}_{\Sigma})=5$,
$p_g(\mathcal{O}_X)=p_g(\mathcal{O}_{\Sigma})$. Therefore, the
canonical map of $X$ factorizes through that of $\Sigma$. This means
that the degree of the canonical map of $X$ is $5$.

\end{document}